\def\NZQ{\Bbb}               % the font for N,Z,Q,R,C
\def\ZZ{{\NZQ Z}}
\def\frk{\frak}               % font for "Fraktur"
\def\mm{{\frk m}}
\def\Phi{{\frk n}}
\def\Phi{{\frk N}}
\def\MR{{\mathcal R}}
\def\opn#1#2{\def#1{\operatorname{#2}}} % to make operators
\opn\chara{char} \opn\length{\ell} \opn\pd{pd} \opn\rk{rk}
\opn\projdim{proj\,dim} \opn\injdim{inj\,dim} \opn\rank{rank}
\opn\depth{depth} \opn\grade{grade} \opn\height{height}
\opn\embdim{emb\,dim} \opn\codim{codim}
\opn\Tr{Tr} \opn\bigrank{big\,rank}
\opn\superheight{superheight}\opn\lcm{lcm}
\opn\trdeg{tr\,deg}%\emph{
\opn\reg{reg} \opn\lreg{lreg} \opn\ini{in} \opn\lpd{lpd}
\opn\size{size} \opn\sdepth{sdepth}
\opn\link{link}\opn\fdepth{fdepth}\opn\lex{lex}
\opn\div{div} \opn\Div{Div} \opn\cl{cl} \opn\Cl{Cl}
\opn\Spec{Spec} \opn\Supp{Supp} \opn\supp{supp} \opn\Sing{Sing}
\opn\Ass{Ass} \opn\Min{Min}\opn\Mon{Mon}
\opn\Ann{Ann} \opn\Rad{Rad} \opn\Soc{Soc}
\opn\Im{Im} \opn\Ker{Ker} \opn\Coker{Coker} \opn\Am{Am}
\opn\Hom{Hom} \opn\Tor{Tor} \opn\Ext{Ext} \opn\End{End}
\opn\Aut{Aut} \opn\id{id}
\opn\nat{nat}
\opn\pff{pf}%   \pf exists already
\opn\Pf{Pf} \opn\GL{GL} \opn\SL{SL} \opn\mod{mod} \opn\ord{ord}
\opn\Gin{Gin} \opn\Hilb{Hilb}\opn\sort{sort}
\opn\aff{aff} \opn\con{conv} \opn\relint{relint} \opn\st{st}
\opn\lk{lk} \opn\cn{cn} \opn\core{core} \opn\vol{vol}
\opn\link{link} \opn\star{star}\opn\lex{lex}\opn\set{set}
\opn\gr{gr}
\opn\dstab{dstab}
\opn\astab{astab}
\def\pot#1#2{#1[\kern-0.28ex[#2]\kern-0.28ex]}
\opn\dirlim{\underrightarrow{\lim}}
\opn\inivlim{\underleftarrow{\lim}}
\let\union=\cup
\let\Union=\bigcup
\def\Implies{\ifmmode\Longrightarrow \else
        \unskip${}\Longrightarrow{}$\ignorespaces\fi}
\def\implies{\ifmmode\Rightarrow \else
        \unskip${}\Rightarrow{}$\ignorespaces\fi}
\def\iff{\ifmmode\Longleftrightarrow \else
        \unskip${}\Longleftrightarrow{}$\ignorespaces\fi}
\newtheorem{Theorem}{Theorem}[section]
\newtheorem{Lemma}[Theorem]{Lemma}
\newtheorem{Corollary}[Theorem]{Corollary}
\newtheorem{Remark}[Theorem]{Remark}
\let\epsilon\varepsilon
\let\kappa=\varkappa
\def\qed{\ifhmode\textqed\fi
      \ifmmode\ifinner\quad\qedsymbol\else\dispqed\fi\fi}
\def\textqed{\unskip\nobreak\penalty50
       \hskip2em\hbox{}\nobreak\hfil\qedsymbol
       \parfillskip=0pt \finalhyphendemerits=0}
\def\dispqed{\rlap{\qquad\qedsymbol}}
\opn\dis{dis}
\def\pnt{{\raise0.5mm\hbox{\large\bf.}}}
\opn\Lex{Lex}
\numberwithin{equation}{section}
\begin{document}

\title {Depth stability of edge ideals}

\author {J\"urgen Herzog and  Takayuki Hibi}

\address{J\"urgen Herzog, Fachbereich Mathematik, Universit\"at Duisburg-Essen, Campus Essen, 45117
Essen, Germany} \email{juergen.herzog@uni-essen.de}

\address{Takayuki Hibi, Department of Pure and Applied Mathematics, Graduate School of Information Science and Technology,
Osaka University, Toyonaka, Osaka 560-0043, Japan}
\email{hibi@math.sci.osaka-u.ac.jp}

\subjclass[2010]{13A02, 13A15, 13A30, 13C15}
\keywords{depth, analytic spread,  edge ideals}

\begin{abstract}
Let $G$ be a connected finite simple graph and let $I_G$ be the edge ideal of $G$. The smallest number $k$ for which $\depth S/I_G^k$ stabilizes is denoted by $\dstab(I_G)$. We show that $\dstab(I_G)<\ell(I_G)$ where $\ell(I_G)$ denotes the analytic spread of $I$. For trees we give a stronger upper bound for $\dstab(I_G)$. We also show for any two integers $1\leq a<b$ there exists a tree for which $\dstab(I_G)=a$ and $\ell(I_G)=b$.
\end{abstract}
\maketitle
\section*{Introduction}
It is a general feature that various homological and algebraic properties stabilize for powers of ideals in a Noetherian ring $R$. Most famous are the results of Brodmann who showed in \cite{Br1} that $\Ass(R/I^k)$ stabilizes for large $k$, and in \cite{Br2} that for $R$ a Noetherian local ring, $\depth R/I^k$ is constant  for all $k\gg 0$. Both statements are valid as well when $I$ is a graded ideal in the polynomial ring $S=K[x_1,\ldots,x_n]$, where $K$ is a field.

The natural question arises whether there exists a bound $k_0$ independent of $I$ but only dependent of $R$ with the property that $\Ass(R/I^k)$ and $\depth R/I^k$ are stable for all $k\geq k_0$. In \cite{HRV} the following invariants were introduced:
\[
\astab(I)=\min\{k\:\; \Ass(R/I^k)=\Ass(R/I^l)\text{ for all $l\geq k$}\},
\]
and when $R$ is local,
\[
\dstab(I)=\min\{k\:\; \depth R/I^k=\depth R/I^l\text{ for all $l\geq k$}\}.
\]
For graded ideals in the polynomial ring $S$, one defines  $\astab(I)$ and $\dstab(I)$ in the same way.

In general $\astab(I)$ and $\dstab(I)$ are incomparable. Indeed, in \cite{HRV} an example of a monomial ideal is  given with $\astab(I)<\dstab(I)$ and another one  with $\dstab(I)<\astab(I)$. On the other hand, it is shown in \cite[Theorem 4.1]{HQ} that $\astab(I), \dstab(I)<\ell(I)$ for any polymatroidal ideal $I\subset S$, where $\ell(I)$ denotes the analytic spread of $I$,  which by definition is the Krull dimension of $\MR(I)/\mm\MR(I)$. Here $\MR(I)$ denotes the Rees ring of $I$ and $\mm$ the graded maximal ideal $(x_1,\ldots,x_n)$ of $S$. Since $\ell(I)\leq \dim S$ for any graded ideal, we see that  $\astab(I)$ and $\dstab(I)$ are bounded by the Krull dimension of $S$ for any polymatroidal ideal $I$. There is no example of a graded ideal $I\subset S$ known to us for which $\astab(I)$ or $\dstab(I)$ exceeds the Krull dimension of $S$. Based on experimental evidence we therefore expect that $\astab(I), \dstab(I)< \ell(I)\leq \dim R$  when $R$ is a regular local ring. By passing to the completion, one would then have similar inequalities in the graded case.

In  this paper we show that $\dstab(I_G)< \ell(I_G)$ holds for the  edge ideal of a connected finite simple graph $G$. This is the result of Theorem~\ref{upper}. In the case that $G$ is a  tree we give  a stronger upper bound for  value of $\dstab(I_G)$ in terms of the data of the tree. Indeed, we show in Theorem~\ref{tree} that if $G$ is a tree with $n$ vertices and $m$ free vertices, that is, vertices of degree 1, then for the edge ideal $I_G$ of $G$ we have $\dstab(I_G)\leq n-m$. This upper bound is reached for example, when $G$ is a path graph. As a simple consequence of this result it can be shown (see Corollary~\ref{ab}) that for any two integers $a$ and $b$ with $1\leq a<b$, there exists a tree $G$ with $\dstab(I_G)=a$ and $\ell(I)=b$.

It is remarkable that the same number $n-m$ is a  bound of $\astab(I_G)$ when $G$ is a non-bipartite graph, as shown by Chen, Morey and Sung \cite{CMS}. In the bipartite case, one has $\astab(I_G)=1$ by a classical result of Simis, Vasconcelos and Villarreal \cite{SVV}.
Thus in any case $\astab(I)< \ell(I)$. In a very recent paper \cite{LT} Lam and Trung describe explicitly $\Ass(S/I_G^t)$ for all $t$ and any finite simple graph $G$ in terms of the ear decompositions of the induced strongly non-bipartite graphs of $G$. As a consequence, Lam and Trung succeed to give an explicit formula \cite[Corollary 4.8]{LT} for  $\astab(I_G)$ when $G$ is non-bipartite.

The proofs of the results in this paper depend very much on certain upper and lower bounds for the depth of powers of monomial ideals. The first of these bounds  is a formula \cite[Theorem 3.3]{HQ} giving an upper bound for the depth of $S/I^k$ when $I$ is monomial ideal generated in s single degree. This bound is expressed in terms of the relation graph of $I$. The details are given in Section~\ref{socle}. The other tool used to show that $\dstab(I_G)\ell<(I_G)$ is a result of Eisenbud and Huneke  \cite[Proposition 3.3]{EH} from which it follows that if $\MR(I)$ is Cohen--Macaulay, then $\dstab(I) =\min\{k\:\; \depth R/I^k=\dim R-\ell(I)\}$. Finally in the proof of Theorem~\ref{tree} we use a  result of Morey \cite[Corollary 3.7]{M} which says that if $G$ is a tree whose diameter is $\delta$,  then $\depth S/I_G^k\geq \min\{\left\lceil\frac{\delta-k+q}{3}\right\rceil, 1\}$. Here   $q$ is the number of vertices $v$ of $G$ with the property that $v$ is not  a free vertex and there is at most one vertex in the neighborhood of $v$ which is not free.

\section{An upper bound of  depth stability}
\label{socle}

We recall from \cite{HQ} the definition of  the linear relation graph of a monomial ideal $I$ generated in a single degree.  Let $G(I)=\{u_1,\ldots,u_m\}$ denote the unique minimal set of monomial generators of $I$. The {\em linear relation graph} $\Gamma$ of $I$ is the graph with edge set
\[
E(\Gamma)=\{\{i,j\}\: \text{there exist $u_k,u_l\in G(I)$ such that $x_iu_k=x_ju_l$}\}
\]
and vertex set $V(\Gamma)=\Union_{\{i,j\}\in E(\Gamma)}\{i,j\}$.

If $G$ is a finite graph, and $I_G$ is the edge ideal of $G$ we denote by $\Gamma_G$ the linear relation graph of $I_G$.

\begin{Lemma}
\label{relgra}
Let $G$ be a finite connected graph on $[n]$, and let $r$ be the number of vertices of $\Gamma_G$  and $s$  the  number of its connected components.
 \begin{enumerate}
\item[{\em (a)}] If $G$  bipartite and not a star graph, then $r=n$ and $s=2$.

\item[{\em (b)}] If $G$ is non-bipartite, then $r=n$ and $s=1$.
\end{enumerate}
\end{Lemma}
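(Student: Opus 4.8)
The plan is to translate the linear relation graph $\Gamma_G$ into purely combinatorial data about $G$, and then read off $r$ and $s$ from that description. The crucial first step is the claim that, for $i\neq j$, one has $\{i,j\}\in E(\Gamma_G)$ if and only if $i$ and $j$ have a common neighbor in $G$. To prove this I would analyze the defining relation $x_iu_k=x_ju_l$ with $u_k,u_l\in G(I_G)$. Writing $u_k=x_ax_b$ and $u_l=x_cx_d$, the equation says that the multisets $\{i,a,b\}$ and $\{j,c,d\}$ coincide. Comparing multiplicities forces $i\notin\{a,b\}$ and $j\notin\{c,d\}$, since otherwise some variable would occur with multiplicity $2$ on only one side, using $i\neq j$, $a\neq b$ and $c\neq d$. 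Hence both sides are genuine $3$-element sets, and matching them yields $u_k=\{j,b\}$, $u_l=\{i,b\}$ for a common vertex $b$ adjacent to both $i$ and $j$. The converse is immediate by taking $u_k=x_jx_b$, $u_l=x_ix_b$. Thus $\Gamma_G$ is exactly the graph on $[n]$ in which $i$ and $j$ are joined precisely when they are the endpoints of a path of length $2$ in $G$.

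With this description in hand I would next compute the vertex set. A vertex $i$ lies in $V(\Gamma_G)$ if and only if it is an endpoint of some $\Gamma_G$-edge, i.e. if and only if some neighbor $b$ of $i$ has a second neighbor, equivalently $i$ has a neighbor of degree at least $2$. Therefore $i\notin V(\Gamma_G)$ forces every neighbor of $i$ to be a leaf whose only neighbor is $i$; since $G$ is connected this means $G$ is a star with center $i$. Consequently, if $G$ is not a star then $V(\Gamma_G)=[n]$ and $r=n$. In particular every non-bipartite graph is not a star (stars being bipartite), so $r=n$ in case (b), while $r=n$ in case (a) holds by the standing hypothesis that $G$ is not a star.

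It remains to count components, and here the key observation is that a $\Gamma_G$-edge corresponds to a walk of length $2$ in $G$; concatenating, $i$ and $j$ lie in the same component of $\Gamma_G$ exactly when they are joined by an even walk in $G$. For the nontrivial direction, splitting an even walk $w_0,\dots,w_{2t}$ into the steps $w_{2k-2}\to w_{2k}$, each realized through the common neighbor $w_{2k-1}$, and discarding repetitions produces a walk in $\Gamma_G$. When $G$ is bipartite with parts $A,B$, an even walk joins two vertices iff they lie in the same part, so each part is a single component; since $G$ is connected with at least one edge both parts are nonempty, whence $s=2$, proving (a). When $G$ is non-bipartite it contains an odd closed walk, so any two vertices are joined by walks of both parities and in particular by an even walk; thus $\Gamma_G$ is connected and $s=1$, proving (b).

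The only genuinely delicate point is the combinatorial identification of $E(\Gamma_G)$ and $V(\Gamma_G)$ in the first two paragraphs --- in particular the multiplicity comparison that rules out $i\in\{a,b\}$, and the verification that a vertex can drop out of $V(\Gamma_G)$ only when $G$ is a star. Once this is settled, the component count is a routine parity argument, and the star graph is singled out precisely because its center is the unique vertex that may fail to appear in $V(\Gamma_G)$.
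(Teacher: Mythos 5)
Your proof is correct and follows essentially the same route as the paper: identify $E(\Gamma_G)$ with pairs of vertices having a common neighbor, translate $\Gamma_G$-connectivity into the existence of even walks in $G$, and then count components by parity. You merely spell out details the paper only asserts (the multiset analysis of $x_iu_k=x_ju_l$, and the observation that a vertex is missing from $V(\Gamma_G)$ only when $G$ is a star centered there), and your use of walks rather than paths is the careful way to phrase the parity argument.
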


\begin{proof}
First note that $\{i,j\}\in E(\Gamma_G)$ if and only if there exists $k\in [n]$ such that $\{i,k\},\{j,k\}\in E(G)$. It follows that $i$ and $j$ belong to the same connected component of $\Gamma_G$ if and only if there is path of even length connecting $i$ and $j$.

(a) Let $U\union V$  be the decomposition of the vertex set of $G$. Since $G$ is not a star graph, we have $|U|,|V|\geq 2$. Hence, since $G$ is connected, for any $i\in U$ there exist $k\in V$ and $j\in U$ with $i\neq j$ such that $\{i,k\},\{j,k\}\in E(G)$. Therefore,  $i\in V(\Gamma_G)$. Similarly, if $i\in V$ then $i\in V(\Gamma_G)$. Let $i,j\in[n]$. Since $G$ is bipartite, there exists a path of even length connecting $i$ and $j$, if and only if either $i,j\in U$ or $i,j\in V$. Hence $\Gamma_G$ has two components, each of which is a complete graph.

(b) Since $G$ is non-bipartite,  $G$ contains at least one odd cycle, and since $G$ is connected it follows that any two vertices $i$ and $j$ can be connected by a path. Hence $r=n$ and $s=1$.
\end{proof}

We will use Lemma~\ref{relgra} to prove

\begin{Theorem}
\label{upper}
Let $G$ be a finite connected graph on $[n]$  with at least two edges. Then $\dstab(I_G)< \ell(I)$. In particular, $\dstab(I_G)<n$.
\end{Theorem}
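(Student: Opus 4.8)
The plan is to combine the relation-graph depth bound of \cite{HQ} with the exact value of the analytic spread. The fibre cone of $I_G$ is the edge subring $K[x_ix_j:\{i,j\}\in E(G)]$, whose Krull dimension equals $n$ when $G$ is non-bipartite and $n-1$ when $G$ is bipartite; hence $\ell(I_G)=n$ in the non-bipartite case and $\ell(I_G)=n-1$ in the bipartite case. Write $r=|V(\Gamma_G)|$ and let $s$ be the number of connected components of $\Gamma_G$. By Lemma~\ref{relgra} we have $r=n$ with $s=1$ (non-bipartite) or $s=2$ (bipartite, non-star), while for a star one checks directly that $r=n-1$ and $s=1$. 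In every case this yields the key identity
\[
\ell(I_G)=r-s+1,\qquad\text{equivalently}\qquad r-s=\ell(I_G)-1.
\]

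Next I would feed $k=r-s$ into \cite[Theorem 3.3]{HQ}, which for a monomial ideal generated in a single degree gives $\depth S/I_G^k\le n-k-1$ for $1\le k\le r-s$. Since $G$ is connected with at least two edges, $\ell(I_G)\ge 2$, so $k=\ell(I_G)-1$ lies in the admissible range and
\[
\depth S/I_G^{\,\ell(I_G)-1}\ \le\ n-\bigl(\ell(I_G)-1\bigr)-1\ =\ n-\ell(I_G).
\]
Thus the depth has already reached (at most) the value $n-\ell(I_G)$ by the step $\ell(I_G)-1$; it remains to show that this is the stable value and that the depth does not increase again.

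Here I would split into two cases. If $G$ is bipartite, then $n-\ell(I_G)=1$; by \cite{SVV} we have $\Ass(S/I_G^k)=\Min(I_G)$ for all $k$, and since a connected graph with at least two edges has $\mm\notin\Min(I_G)$, it follows that $\depth S/I_G^k\ge 1$ for every $k$, so the displayed inequality is an equality at $k=\ell(I_G)-1$. Because the Rees ring $\MR(I_G)$ of a bipartite edge ideal is Cohen--Macaulay, \cite[Proposition 3.3]{EH} gives $\dstab(I_G)=\min\{k:\depth S/I_G^k=\dim S-\ell(I_G)\}$, whence $\dstab(I_G)\le\ell(I_G)-1$. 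If $G$ is non-bipartite, then $n-\ell(I_G)=0$, so the bound forces $\depth S/I_G^{\ell(I_G)-1}=0$, i.e. $\mm\in\Ass(S/I_G^{\ell(I_G)-1})$; the persistence of associated primes for edge ideals then keeps $\mm\in\Ass(S/I_G^k)$ for all $k\ge\ell(I_G)-1$, so $\depth S/I_G^k=0$ there and again $\dstab(I_G)\le\ell(I_G)-1$. In both cases $\dstab(I_G)<\ell(I_G)\le n$.

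I expect the propagation step to be the main obstacle: \cite[Theorem 3.3]{HQ} only controls the depth up to $k=r-s$, so one must rule out the depth climbing back above $n-\ell(I_G)$ for larger $k$. This is precisely what forces the use of the Cohen--Macaulayness of $\MR(I_G)$ together with \cite[Proposition 3.3]{EH} in the bipartite case, and of the persistence of $\Ass(S/I_G^k)$ in the non-bipartite case; securing these two inputs is the delicate part, whereas the numerical identity $r-s=\ell(I_G)-1$ and the single application of the relation-graph bound are routine.
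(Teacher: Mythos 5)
Your proof is correct and follows essentially the same route as the paper: the relation-graph depth bound from \cite{HQ} combined with Lemma~\ref{relgra} to get $\depth S/I_G^{\ell(I_G)-1}\le n-\ell(I_G)$, then Cohen--Macaulayness of $\MR(I_G)$ with \cite[Proposition 3.3]{EH} in the bipartite case and persistence of associated primes in the non-bipartite case to show the depth stays at that value. The only (harmless) differences are organizational: you compute $\ell(I_G)$ directly as the dimension of the edge subring and fold the star graph into the general argument via $r=n-1$, $s=1$, where the paper treats the star separately by an explicit depth computation.
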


\begin{proof}
For the proof of the theorem we will use the following two results from \cite{HQ}: let $S$ be a polynomial ring in $n$ variables  over a field,  $I\subset S$  a monomial ideal generated in a single degree, and let $\Gamma$ be the linear  relation graph of $I$. Suppose that $\Gamma$ has $r$ vertices and $s$ connected components. Then
\begin{eqnarray}
\label{aj}
\depth S/I^t\leq n-t-1 \quad\text{for} \quad t=1,\ldots,r-s,
\end{eqnarray}
and
\begin{eqnarray}
\label{hq}
\ell(I)\geq r-s+1.
\end{eqnarray}

We first assume that $G$ is non-bipartite. Since, by Lemma~\ref{relgra},   $r=n$ and $s=1$,  it follows from (\ref{aj}) that $\depth S/I_G^{n-1}=0$. Since for any edge ideal $I_G$ one has $\Ass(I_G^k)\subset \Ass(I_G^{k+1})$ for all $k$, c.f.~\cite[Theorem 2.15]{MM}, we see that $\depth S/I^k =0$ for all $k\geq n-1$. On the other hand, by (\ref{hq}) one has  $\ell(I)\geq n$ which implies that $\ell(I_G)=n$. Thus  we conclude that $\dstab(I_G)<\ell(I)$.

Next we assume that $G$ is bipartite.  The Rees ring $\mathcal{R}(I_G)$ of $I_G$ may be considered as the edge ring of the graph $G^*$ where $G^*$ is obtained from $G$ by adding a new vertex and connecting this vertex with all edges of $G$. Since $G$ is bipartite it follows from \cite[Corollary 2.3]{OH} that  $\mathcal{R}(I_G)$ is normal. Since  $\mathcal{R}(I_G)$ is a toric ring, a theorem of Hochster \cite[Theorem~1]{Ho} implies that $\mathcal{R}(I_G)$ is Cohen-Macaulay. Now we apply the result of Eisenbud and Huneke \cite[Proposition 3.3]{EH} which says that $\depth S/I_G^k\geq n-\ell(I)$ and $\depth S/I_G^k= n-\ell(I)$ for all $k\gg 0$, and whenever $\depth S/I_G^k=n-\ell(I)$ then $\depth S/I_G^l=n-\ell(I)$ for all $l\geq k$.  By (\ref{hq}) we have  $\ell(I_G)\geq n-1$. Since $G$ is bipartite, it follows that the rank of  the vertex-edge incidence matrix is $<n$. Since this rank gives us the analytic spread of $I_G$ we see that $\ell(I_G)=n-1$, so that $\depth S/I_G^k=1$ for all $k\gg 0$. On the other hand, assuming that $G$ is not a star graph,   Lemma~\ref{relgra} and (\ref{aj}) imply  $\depth S/I_G^{n-2}\leq 1$,  and hence $\depth S/I_G^k=  1$ for all $k\geq n-2$. Thus we see that $\dstab(I)\leq n-2=\ell(I)-1$ in this case. Now assume that $G$ is a star graph with center 1. Then $I_G=x_1(x_2,x_3,\ldots,x_n)$, and it follows that $\depth S/I_G^k=1$ for all $k$. Therefore, $\dstab(I_G)=1<\ell(I)$ if $G$ is not just an edge.
\end{proof}

\begin{Remark}{\em
\label{aleqb}
We notice that $\astab(I_G)\leq \dstab(I_G)$ if $G$ is bipartite,  and $\dstab(I_G)\leq \astab(I_G)$ if $G$ is non-bipartite. Indeed, in the first case, $\astab(I_G)=1$, since by a theorem of Simis, Vasconcelos and Villarreal \cite[Theorem 5.9]{SVV}, $I_G$ is normally torsionfree, which means that $\Ass(I_G^k)=\Min(I_G^k)$ for all $k$, and this  implies that $\Ass(I_G)=\Ass(I_G^k)$ for all $k\geq 1$. On the other hand, if $G$ is non-bipartite, then, as seen in the proof of Theorem~\ref{upper}, $\dstab(I)=\min\{k\: \depth S/I_G^k=0\}$. Thus,  $\dstab(I_G)=\min\{k\:\; \mm\in \Ass(S/I_G^k)\}$, and this implies that $\dstab(I_G)\leq \astab(I_G)$

The inequality, $\dstab(I_G)\leq \astab(I_G)$ together with \cite[Corollary 4.3]{CMS} give another proof of Theorem~\ref{upper}, and even a stronger bound for $\dstab(I)$ in the  non-bipartite case.
}
\end{Remark}

\section{The depth stability of trees}
\label{edge}

\begin{Theorem}
\label{tree}
Let $G$ be a tree on $[n]$ with $m$ leaves.
\begin{enumerate}
\item[{\em (a)}] $\dstab(I_G)\leq n-m$.
\item[{\em (b)}] Let $P$ be a path of maximal length in $G$, and suppose that all vertices of $G$ have distance at most $2$ to $P$. Then $\dstab(I_G)=n-m$.

\end{enumerate}
\end{Theorem}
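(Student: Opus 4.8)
The plan is to treat both parts through the characterization of $\dstab$ coming from Eisenbud and Huneke. Since $G$ is a tree it is bipartite and connected, so by the argument in the proof of Theorem~\ref{upper} the Rees ring $\MR(I_G)$ is Cohen--Macaulay, $\ell(I_G)=n-1$, and the stable value of $\depth S/I_G^k$ equals $n-\ell(I_G)=1$. Moreover $I_G$ is normally torsionfree \cite{SVV}, so $\Ass(S/I_G^k)=\Min(I_G)$ for all $k$ and in particular $\mm\notin\Ass(S/I_G^k)$; hence $\depth S/I_G^k\geq 1$ for every $k$. Combining these facts with \cite[Proposition 3.3]{EH} gives
\[
\dstab(I_G)=\min\{k\:\; \depth S/I_G^k=1\}.
\]
After disposing of the trivial cases (a single edge, where $n-m=0$ is excluded, and a star, where $\depth S/I_G^k=1$ for all $k$ so that $\dstab=1=n-m$), both statements reduce to locating this first hitting time.

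For part (a) it therefore suffices to prove that $\depth S/I_G^{\,n-m}=1$, and since the depth is always at least $1$ this is the same as the bound $\depth S/I_G^{\,n-m}\leq 1$. The relation-graph estimate (\ref{aj}) only yields $\depth S/I_G^{\,n-2}\leq 1$, because for a bipartite non-star graph Lemma~\ref{relgra} gives $r-s=n-2$; this recovers the correct value for a path but is too weak as soon as $G$ has more than two leaves. The extra $m-2$ steps must be saved by a socle construction adapted to the leaves. Writing $w_1,\dots,w_{n-m}$ for the internal (non-leaf) vertices, I would build from these an explicit monomial $f$ together with a linear form $\theta$ that is regular modulo $I_G^{\,n-m}$, and then verify that $\mm\in\Ass\bigl(S/(I_G^{\,n-m},\theta)\bigr)$ by checking that $f\notin(I_G^{\,n-m},\theta)$ while $x_if\in(I_G^{\,n-m},\theta)$ for every variable $x_i$; this forces $\depth S/(I_G^{\,n-m},\theta)=0$ and hence $\depth S/I_G^{\,n-m}=1$. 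The idea is that each internal vertex contributes exactly one factor to $f$, so that its degree is governed by $n-m$ rather than by $n-2$. Verifying the socle property of $f$ against all variables is the main obstacle of part~(a); an alternative route is an induction on the leaves via the short exact sequence $0\to S/(I_G^{t}:x_\ell)\to S/I_G^{t}\to S/(I_G^{t},x_\ell)\to 0$, in which $\ell$ is a leaf with neighbour $w$, $G'=G\setminus\{\ell\}$, and $I_G^{t}:x_\ell=I_{G'}^{t}+x_wI_G^{t-1}$, but the bookkeeping there is heavier.

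For part (b) I must additionally show $\dstab(I_G)\geq n-m$, that is $\depth S/I_G^{\,n-m-1}\geq 2$ (this is vacuous unless $n-m\geq 2$, which also handles the diameter-$2$ stars). Here I would invoke Morey's lower bound \cite[Corollary 3.7]{M}: with $k=n-m-1$ it gives $\depth S/I_G^k\geq\lceil (\delta-k+q)/3\rceil$, so it is enough to prove $\delta-(n-m-1)+q\geq 4$, equivalently
\[
\delta+q\geq n-m+3 .
\]
The remaining work is purely combinatorial and uses the hypothesis that some maximal path $P\colon v_0,\dots,v_\delta$ meets every vertex within distance $2$. First one observes that the endpoints $v_0,v_\delta$ and every vertex at distance $2$ from $P$ are leaves, so that every internal vertex off $P$ lies at distance exactly $1$ from $P$; such a vertex is adjacent to a single path vertex $v_i$ with $2\leq i\leq \delta-2$ and has all its other neighbours among the leaves, hence it has exactly one internal neighbour and is counted by $q$. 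There are $(n-m)-(\delta-1)$ such vertices. Secondly, the maximality of $P$ forbids $v_1$ and $v_{\delta-1}$ from having any internal neighbour off $P$ (such a neighbour would carry a leaf and produce a path longer than $P$), so $v_1$ and $v_{\delta-1}$ also have at most one internal neighbour and are counted by $q$. Adding these two on-path contributions to the $(n-m)-(\delta-1)$ off-path ones yields $q\geq (n-m)-\delta+3$, which is exactly the inequality required. Carrying out this counting carefully, with attention to the degenerate small-diameter cases, is the main obstacle of part~(b).
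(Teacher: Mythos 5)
Your overall framework is the same as the paper's: reduce everything via \cite[Proposition 3.3]{EH} to locating the first power $k$ with $\depth S/I_G^k=1$, prove (a) by exhibiting a regular linear form $\theta$ and a socle element modulo $(I_G^{\,n-m},\theta)$, and prove (b) by feeding the inequality $\delta+q\geq n-m+3$ into Morey's bound. Part (b) as you present it is essentially complete and correct: your direct count (internal off-path vertices all lie at distance $1$ from $P$, each has exactly one internal neighbour, and $v_1,v_{\delta-1}$ contribute two more to $q$) is a legitimate replacement for the paper's induction on the vertices off $P$, which establishes the sharper statement $\delta-(n-m)+q=3$; both give the needed $\lceil 4/3\rceil=2$.

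Part (a), however, contains a genuine gap: you describe the socle construction as a plan and explicitly defer its two essential verifications, and these are precisely where all the work of the paper lies. Concretely, you never (i) specify the linear form and prove it is a non-zerodivisor on $S/I_G^k$, nor (ii) specify the monomial $f$ and check the socle conditions. In the paper, $\theta=x_1-x_2$ where $1$ is a leaf with unique neighbour $2$; regularity is \emph{not} automatic and is proved by introducing a $\ZZ^{n-1}$-grading with $\deg x_1=\deg x_2$, so that elements of $I_G:(x_1-x_2)$ may be taken homogeneous of the form $(x_1-x_2)^a x_1^{b_1}\cdots x_n^{b_n}$, and then reducing to $k=1$ via normal torsionfreeness. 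The socle element is $w=x_1\prod_{\{i,j\}\in E}x_ix_j$ with $E$ the set of internal edges (so $|E|=n-m-1$); that $x_iw\in(I_G^{\,n-m},x_1-x_2)$ for every $i$ requires tracing the unique path from $i$ to the distinguished leaf $1$ and re-partitioning $w$ into $n-m$ edge factors along that path, with separate treatment of the cases $i$ free and $i$ not free. Your remark that ``each internal vertex contributes exactly one factor to $f$'' does not match this (the paper's $w$ uses one factor per internal \emph{edge}, plus the extra $x_1$), so the object you would need is not yet correctly identified, let alone verified. Until (i) and (ii) are carried out, (a) is unproved, and since (b) relies on (a) for the upper bound $\dstab(I_G)\leq n-m$, (b) is also incomplete as a whole.
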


\begin{proof}
(a) We may assume that $1$ is a  free vertex of $G$, and $2$ is its  unique neighbor. For simplicity, we set $I=I_G$, and show that $x_1-x_2$ is a non-zerodivisor of $S/I^k$ for all $k$. It is enough to prove this for $k=1$, because, as shown in \cite{SVV},  the edge ideal of any bipartite graph is normally torsionfree, equivalently $I^k$ has no embedded prime ideals for any $k\geq 1$.

We define the following $\ZZ^{n-1}$-grading on $S$ by setting $\deg x_1=\deg x_{2}=\epsilon_1$ and $\deg x_i=\epsilon_i$ for all $i>2$. Here $\epsilon_i$ is the $i$th canonical unit vector of $\ZZ^n$. Then $I$ is a $\ZZ^{n-1}$-graded ideal and $x_1-x_2$ is homogeneous of degree $\epsilon_1$. We need to show that $I:(x_1-x_2)\subseteq I$. Let $g\in I:(x_1-x_2)$. Because of the choice of our grading, we may assume that $g$ is homogeneous, which implies that $g=(x_1-x_2)^ax_1^{b_1}x_2^{b_2}\cdots x_n^{b_n}$ with $a,b_1,\ldots,b_n\geq 0$. It follows that $f=(x_1-x_2)^{a+1}x_1^{b_1}x_2^{b_2}\cdots x_n^{b_n}\in I$. Since $I$ is a monomial ideal this is the case if and only if for $i=0,\ldots,a+1$,  each monomial $v_i=x_1^{(a+1)-i+b_1}x_2^{i+b_2}x_3^{b_3}\cdots x_n^{b_n}$ belongs to $I$. Since $x_1x_2\in I$ and since $x_1x_2$ divides $v_i$ for $i=1,\ldots,a$ we see that $f\in I$ if and only if $v_0$ and $v_{a+1}\in I$. By the same reasoning it follows that $g\in I$ if and only if $w_0= x_1^{a+b_1}x_2^{b_2}\cdots x_n^{b_n}$ and $w_a= x_1^{b_1}x_2^{a+b_2}\cdots x_n^{b_n}$ belong to $I$. If $b_1>0$ and $b_2>0$, then obviously, $g\in I$. If either $b_1=0$ and $b_2>0$ or $b_1>0$ and $b_2=0$, then $x_ix_j$ with $i<i$ must divide $x_2^{b_2}\cdots x_n^{b_n}$, and hence $x_ix_j$  divides $w_0$ and $w_a$. Finally, if $b_1=b_2=0$, then $x_ix_j$ with $i<j$ divides $x_3^{b_3}\cdots x_n^{b_n}$, and hence $x_ix_j$  divides $w_0$ and $w_a$.

Next we show that $\depth S/I^{n-m}=1$. Because of \cite[Proposition 3.3]{EH},  this  will then imply that $\dstab(I)\leq n-m$. In order to prove  $\depth S/I^{n-m}=1$,  we show that $\depth S/J=0$ where $J=(I^{n-m},x_1-x_2)$. Let $E$ be the set of edges of $G$ which are not leaves. Since $G$ is tree,  it follows that $|E|=n-m-1$. Let
\[
w=x_1\prod_{\{i,j\}\in E}x_ix_j.
\]
Then $w\not\in J$ because $\deg w=n-m-1$. However we show that
$
x_iw\in J
$
 for $i=1,\ldots,n$, thereby proving that $\depth S/J=0$, as desired.

 Indeed, $x_1w-x_2w\in J$ and $x_2w\in J$. This implies that also $x_1w\in J$. Now let $i\neq 1,2$. If $i$ is a free vertex, let $(i_0,i_1,\ldots,i_k)$  be  the unique path  with  $i_0=i$, $i_{k-1}=2$, $i_k=1$ and $i_l\neq i_{l'}$ for $l\neq l'$. Then $\{i_l,i_{l+1}\}\in E$ for $l=1,\ldots,k-2$. Let
 \[
 E'=\{(i_l,i_{l+1})\:\; l=1,\ldots,k-2\}.
 \]
 Note that  $E'\subset E$,  and
 \[
 w=x_1\prod_{l=1}^{k-2}x_{i_l}x_{i_{l+1}}\prod_{\{i,j\}\in E\setminus E'}x_ix_j.
 \]
 Then
 \[
 x_iw=x_1x_2\prod_{l=0}^{k-3}x_{i_l}x_{i_{l+1}}\prod_{\{i,j\}\in E\setminus E'}x_ix_j\in I^{n-m}.
 \]
 In the above argument, if  $i$  is not a free vertex,  then $E'=\{(i_l,i_{l+1})\:\; l=0,\ldots,k-2\}$ and
 \[
 x_iw=(x_{i_0}x_{i_1})^2\prod_{l=2}^{k-1}x_{i_l}x_{i_{l+1}}\prod_{\{i,j\}\in E\setminus E'}x_ix_j\in I^{n-m}.
 \]
(b) We will show that $\depth S/I^{n-m-1}\geq 2$ if $G$ satisfies the assumptions of $(b)$. Together with (a) it then follows that $\dstab(I)=n-m$. For the proof of this statement we use the following lower bound for the depth of the powers of edge ideals of trees due to  Morey \cite[Corollary 3.7]{M}: let $G$ be tree with diameter $\delta$. The diameter is the maximal length of a path contained in $G$. Furthermore, let $q$ be the number of vertices $v$ of $G$ with the property that $v$ is not  a free vertex and there is at most one vertex in the neighborhood of $v$ which is not free. Then
\[
\depth S/I_G^k\geq \min\{\left\lceil\frac{\delta-k+q}{3}\right\rceil,1\}.
\]
We claim that $\delta-(n-m)+q=3$. The  Morey's lower bound gives $\depth S/I^{n-m-1}\geq 2$, as desired.

Let $P$ be a path in $G$ of length $\delta$. We prove the claim by induction on the number of vertices not belonging to $P$. If this number is zero, then $G=P$, and  the claim is obvious. Now assume that $G\neq P$. Let $v$ be a free vertex of $G$ of distance $1$ to $P$. Removing $v$ does not change   $\delta-(n-m)+q$. Let  $v$ be a free vertex of distance $2$ to $P$. Let  $w$  be the neighbor of $v$. Assume that $v_1,\ldots, v_r$ with $v=v_1$ are the free vertices whose neighbor is $w$. If $r>1$, removing $v_r$ does not change $\delta-(n-m)+q$. After these reductions we may assume  that each vertex $w$ of distance $1$ to $P$ has exactly $2$ neighbors.  It follows that $\delta-(n-m)+q=3$, as desired.
\end{proof}

\begin{Corollary}
\label{ab}
Given integers $a$ and $b$  with $1\leq a<b$. Then there exists a tree $G$ such that $\dstab(I_G)=a$ and $\ell(I_G)=b$.
\end{Corollary}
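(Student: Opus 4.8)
The plan is to realize the prescribed pair $(a,b)$ by an explicit \emph{caterpillar} tree and then read off the two invariants directly from Theorem~\ref{tree}(b) and from the analytic-spread computation used in the proof of Theorem~\ref{upper}. The governing identities are these: for a tree $G$ on $n$ vertices with $m$ leaves satisfying the distance hypothesis of Theorem~\ref{tree}(b) one has $\dstab(I_G)=n-m$, while for any connected bipartite graph on $n$ vertices the analytic spread equals the rank $n-1$ of the vertex-edge incidence matrix, so $\ell(I_G)=n-1$ (this is exactly the fact recorded in the bipartite case of the proof of Theorem~\ref{upper}). Hence I want a tree with $n-1=b$ and $n-m=a$, that is, a tree on $n=b+1$ vertices with exactly $a$ internal (non-leaf) vertices and $m=b+1-a$ leaves.

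For the construction I set $n=b+1$ and $m=n-a=b+1-a$, noting that $m\geq 2$ because $a\leq b-1$. I take a path $p_1p_2\cdots p_a$ as the spine and attach the remaining $m$ vertices as leaves to the spine, arranging that $p_1$ and $p_a$ each receive at least one leaf; this is possible precisely because $m\geq 2$. Then every spine vertex has degree at least $2$, so $p_1,\ldots,p_a$ are exactly the internal vertices and the $m$ attached vertices are exactly the leaves, giving a tree on $n$ vertices with $m$ leaves. (When $a=1$ the construction degenerates to the star $K_{1,b}$, with $p_1$ carrying all $b\geq 2$ leaves.)

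The verification has two parts. First, I would confirm the hypothesis of Theorem~\ref{tree}(b): a longest path $P$ runs from a leaf attached to $p_1$ along the entire spine to a leaf attached to $p_a$, and every remaining leaf lies at distance $1$ from some spine vertex belonging to $P$, so all vertices are within distance $2$ of $P$. Theorem~\ref{tree}(b) then yields $\dstab(I_G)=n-m=a$. Second, $G$ is a connected bipartite graph on $n$ vertices, so $\ell(I_G)$ equals the rank $n-1$ of its incidence matrix, i.e.\ $\ell(I_G)=b$; in the star case $a=1$ one checks this directly, since the generators $x_c x_j$ (with $c$ the center) are algebraically independent, again forcing $\ell=n-1$. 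Combining the two computations produces a tree with $\dstab(I_G)=a$ and $\ell(I_G)=b$, as required.

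I do not expect a substantive obstacle here: once the caterpillar is pinned down, both invariants are handed to us by the cited theorems. The only points that need genuine care are the bookkeeping of the counts $n=b+1$ and $m=b+1-a$ together with the inequality $m\geq 2$ that guarantees the two end-leaves are available, and the verification that the spine vertices are \emph{precisely} the non-leaves so that $n-m=a$ exactly rather than merely as a bound.
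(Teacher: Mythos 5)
Your proposal is correct and follows essentially the same route as the paper: the paper takes a path of length $a$ and attaches $b-a$ leaves to one of its endpoints, which is just a particular instance of your caterpillar, and then reads off $\dstab(I_G)=n-m=a$ from Theorem~\ref{tree} and $\ell(I_G)=n-1=b$ from the bipartite incidence-matrix rank as in Theorem~\ref{upper}. Your extra care in verifying the distance hypothesis of Theorem~\ref{tree}(b) and in checking that the spine vertices are exactly the non-leaves is exactly the bookkeeping the paper leaves implicit.
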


\begin{proof}
Let $P$ be a path of length $a$. We attach to one of the free vertices of $P$, $b-a$ leaves to obtain the graph $G$. Then $n=b+1$ and $m=(b-a)+1$. Therefore, $\ell(I_G)=n-1=b$ and $\dstab(I_G)=n-m=a$.
\end{proof}

\end{document}